\documentclass[a4paper,12pt,reqno,draft]{amsart}
\usepackage{amssymb,amsmath,array,amscd,amsthm,hhline}

\usepackage[mathscr]{euscript}
\usepackage{stmaryrd}
\usepackage{ulem}

\usepackage{tikz-cd}

\usepackage{color}

\usepackage{mathrsfs}

\renewcommand{\labelenumi}{{\rm \theenumi}}
\renewcommand{\theenumi}{{\rm(\arabic{enumi})}}
\renewcommand\epsilon{\varepsilon}
\renewcommand\emptyset{\varnothing}

\def\lm{\lambda}
\def\Lm{\Lambda}
\renewcommand\kappa{\varkappa}

\def\<{\langle}
\def\>{\rangle}

\voffset=-55pt \hoffset=-1.8cm \textwidth=467pt \textheight=715pt
\headsep=35pt

\def\ito{\stackrel\sim\to}

\def\j{\mathbf j}

\def\C{\mathbb C}
\def\DD{\mathbb D}

\def\Z{\mathbb Z}

\def\D{\mathcal D}

\newtheoremstyle{mytheoremstyle} 
    {\topsep}                    
    {\topsep}                    
    {\rmfamily}                   
    {1em}                           
    {\bf}                   
    {.}                          
    {.5em}                       
    {}  

\newtheorem{theorem}{Theorem}
\newtheorem{lemma}{Lemma}
\newtheorem{corollary}{Corollary}
\newtheorem{remark}{Remark}

\def\le{\leqslant}
\def\ge{\geqslant}

\def\pt{{\rm pt}}
\def\Loc{{\rm Loc_A}}
\def\loc{{\rm Loc}}

\def\IC{\mathop{\rm\mathbf{IC}}\nolimits}

\def\rad{\mathop{\rm rad}}
\def\id{\mathop{\rm id}}
\def\im{\mathop{\rm im}}

\def\GL{\mathop{\rm GL}}
\def\SL{\mathop{\rm SL}}

\def\suchthat{\mathbin{\rm |}}
\def\and{\,\mathbin{\&}\,}

\def\Hom{\mathop{\rm Hom}\nolimits}

\def\pre{{\rm pre}}

\def\ad{{\rm ad}}
\def\Ad{{\rm Ad}}

\def\Lie{\mathop{\mathrm{Lie}}}
\def\Ball{\mathrm K}

\def\h{\mathfrak h}

\def\b{\mathfrak b}

\def\X{\mathfrak X}

\def\S{\mathcal S}

\def\O{\mathcal O}

\def\N{\mathcal N}
\def\M{\mathscr M}

\def\sh{{\rm sh}}

\def\B{\mathscr B}
\def\BB{\mathcal B}
\def\C{\mathbb C}

\def\L{{\mathcal L}}

\def\={\equiv}


\renewcommand{\(}{\left(}
\renewcommand{\)}{\right)}

\def\sectsign{\mathhexbox278}

\def\BB{\mathcal B}

\def\csh#1#2{{\uwave {#1}}{}_{{}_{\scriptstyle #2}}}

\def\NN{\widetilde{\mathcal N}}
\def\Nc{\mathcal N}
\def\Hp{{\vphantom{H^H}}^p\!}

\def\g{\mathfrak g}

\newcommand\eqto{\xrightarrow{
   \,\smash{\raisebox{-0.65ex}{\ensuremath{\sim}}}\,}}

\title{Composition factors for the Springer resolution}
\author{Vladimir Shchigolev}

\begin{document}

\maketitle

\begin{abstract}
Let $\pi:\NN\to\Nc$ be the Springer resolution of the nilpotent cone for a semisimple connected algebraic group $G$ over $\C$
and $k$ be an arbitrary field.
What happens to $\pi_*\csh k\NN[\dim\NN]$ if the decomposition theorem
fails for it? We show that in this case, some additional (with respect to the case $\mathop{\rm char}k$=0) composition factors
of this direct image in the (abelian) category of perverse sheaves may emerge.
These factors emerge from the $Z_G(x)/Z_G(x)^0$-composition factors of the radicals of certain intersection forms and
from that of the top comohologies of Springer fibres (in the non-semisimple case).
\end{abstract}

\section{Introduction}  The recent Hodge theoretic proof of the decomposition theorem by de Cataldo and
Migliorini~\cite{dCM02},~\cite{dCM05} relies on the non-degeneracy of some intersection forms. An extension of this
argument~\cite[Theorem 3.7]{psh} allowed the authors to prove that over a filed of characteristic $p>0$, the decomposition theorem
for semismall morphisms holds if the $p$-modular reductions of these forms are non-degenerate and certain local systems are semisimple.

The aim of this paper is to study the Springer resolution $\pi:\NN\to\Nc$ as an example of such a semismall morphism in the
case when the decomposition theorem fails for $\pi_*\csh k\NN[\dim\NN]$, where $k$ is a field. This direct image is a
perverse sheaf regardless of the characteristic of $k$. So we can study its composition factors in the the abelian category
of perverse sheaves. 

The paper is organized as follows. In Section~\ref{Notation_and_Setup}, we recall the main constructions
connected with Springer resolutions and Slodowy slices. In Section~\ref{Neighbourhood_bases}, we construct a neighbourhood
basis $\{U_x(t)\}$ for a point $x$ in the nilpotent cone satisfying certain properties.
As we plan to consider $kZ_G(x)/Z_G(x)^0$-modules, special attention is paid to the equivariancy of all our constructions here
with respect to a reductive centralizer of $Z_G(x)$. This aim is mainly achieved with the help of Lemma~\ref{lemma:0}
and Corollary~\ref{corollary:0}, which are eqivariant versions of constructions from~\cite{CG97}.

The main result of this paper is Theorem~\ref{theorem:1} proved in Section~\ref{Composition multiplicities_and_semisimplicity},
which is an inequality relating composition factors in the category of perverse sheaves (left-had side) and the
composition factors in the category of $kZ_G(x)/Z_G(x)^0$-modules (right-hand side).

Our calculations here rely on the cohomologies of the compliments to the Springer fibers in the resolutions of the corresponding
Slodowy slices. Note that the cohomologies of similar (smooth) spaces, which may depend substantially on the characteristic
of the filed of coefficients, were already considered in~\cite{perv}, where they were used in Deligne's formula
for the intermediate extension. Although we do not use this formula, we encounter similar cohomologies,
when we calculate the composition of the form $i^*R\xi_*$ (see formula~(\ref{eq:c})). As a result, we get some additional
composition factors of the direct image resulting from the degeneracy of the intersection forms (described in
Section~\ref{Intersection_form_and_cohomology_of_difference}). Their structure is studied in Section~\ref{structureiM}.
More composition factors may come from the non-semisimplicity of the
top cohomology modules for the Springer resolution. On the other hand, if all these forms are non-degenerate and
the top cohomology modules are semisimple, then the decomposition theorem holds,
as also follows from Theorem~\ref{theorem:1}.


\newpage

\section{Notation and setup}\label{Notation_and_Setup}

\subsection{Springer resolution of nilpotent cone}
\label{srnc} Let $G$ be a semisimple connected algebraic group over $\C$, $\g$ be the Lie algebra of $G$ and $\Ad:G\to\GL(\g)$
be the adjoint representation. We make $G$ act on the right on $\g$ by $g\cdot u=\Ad(g)(u)$ for $g\in G$ and $u\in\g$.
We shall consider $\g$ in the metric topology. If $G$ acts on a set $X$ and $g\in G$, then $m_g:X\to X$ denotes
the action by $g$.

The set $\mathcal N$ of all nilpotent elements $x\in\g$ is called the {\it nilpotent cone} of $\g$.
This set is stable under the above action of $G$ and hence decomposes into the union of orbits.
We arbitrarily choose one point in any such orbit and denote by $\Lambda$ the set of all chosen points.
Let $\O_x$ denote the orbit containing $x\in\Nc$. Thus we have the stratification $\X=\{\O_x\}_{x\in\Lm}$,
which is known to be a Whitney stratification (see~\cite{CG97}).

We introduce the following partial order on $\Lambda$: $y\le x$ if and only if $\O_y\subset\overline{\O_x}$.
This order suggests the following terminology: a subset $\Phi\subset\Lm$ is called {\it closed} if $y\le x$ and $x\in \Phi$
imply $y\in \Phi$ and is called {\it open} if its compliment $\overline\Phi=\Lm\setminus \Phi$ is closed.
Equivalently, a subset $\Phi\subset\Lm$ is open if and only if $y\ge x$ and $x\in \Phi$ imply $y\in \Phi$.
Thus the set $\Nc_\Phi:=\bigsqcup_{x\in \Phi}\O_x$ is closed (resp. open) if $\Phi$ is closed (resp. open).

We briefly describe the Springer resolution for $\Nc$.
Let $\BB$ denote the variety of all Borel subalgebras of $\g$.
Let
$$
\NN:=\{(x,\b)\in\Nc\times\BB\,\suchthat\,x\in\b\}.
$$
The projection $\pi:\NN\to\Nc$ to the first component is called the {\it Springer resolution}.
On the other hand, the projection $\NN\to\BB$ to the second component identifies $\NN$ with the cotangent bundle $T^*\BB$.
It is well known that $\pi$ is semismall and proper.

For any subset $\Phi\subset\Lambda$, we set $\NN_\Phi:=\pi^{-1}(\Nc_\Phi)$ and $\pi_\Phi:=\pi\big|_{\NN_\Phi}:\NN_\Phi\to\Nc_\Phi$.
Similarly to $\N_\Phi$, the subset $\NN_\Phi$ is closed (resp. open) if $\Phi$ closed (resp. open).
Clearly, $\pi_\Phi$ is proper, as $\NN_\Phi$ is the full preimage.
We denote by $i_\Phi:\Nc_\Phi\to\Nc$ and $\tilde\imath_\Phi:\NN_\Phi\to\NN$ the natural inclusions.
The preimage $\B_x:=\pi^{-1}(x)$ of an element $x\in\Nc$ is called the {\it Springer fibre}.
For different points $x$ in the same orbit, Springer fibres $\B_x$ are isomorphic, as it follows
from $g\B_x=\B_{gx}$.

We shall write $\dim X$ for the complex dimension of X (vector space, variety).
We set
$$
d_\Nc:=\dim\Nc=\dim\NN,\;\;\; b_x:=\dim\B_x\;\;\text{ and }\;\;d_x:=\dim\O_x\;\;\text{ for }\;\;x\in\Lambda.
$$
Recall that $d_\Nc-d_x=2b_x$. The universal coefficient theorem and the main result of~\cite{DLP} show that $H^m(\B_x,k)=0$
for odd $m$. We use these facts throughout the paper without mention.

\subsection{Slodowy slice}\label{Slodowy_slice} We describe the
main constructions of~\cite{Slodowy_Four_Lectures} that we need here.
Take any $x\in\Lm$. By the Jacobson-Morozov lemma,
there exists a Lie algebra homomorphism $\omega:\mathfrak{sl}_2(\C)\to\g$ mapping the matrix
$\Big(\arraycolsep=2pt\begin{array}{cc}0&1\\[-2pt]0&0\end{array}\Big)$ to $x$.
Let $y$ be the image of $\Big(\arraycolsep=2pt\begin{array}{cc}0&0\\[-2pt]1&0\end{array}\Big)$
and $h$ be that of $\Big(\arraycolsep=2pt\begin{array}{cc}1&\,\;0\\[-2pt]0&\!{-}1\end{array}\Big)$.
These elements satisfy the relations $[x,y]=h$, $[h,y]=-2y$, $[h,x]=2x$.

The homomorphism $\omega$ defines the representation $\widetilde\omega:\SL_2(\C)\to\GL(\g)$ by exponentiating. The image of
$\widetilde\omega$ consists of inner automorphisms of $\g$. Take an arbitrary $t\in\C^*$ and consider the following Lie algebra
automorphisms:
$$
{\rho}(t):=\widetilde\omega\Big(\Big(\arraycolsep=2pt\begin{array}{cc}t&0\\[-2pt]0&t^{-1}\end{array}\Big)\Big),\qquad \sigma(t)(z)=tz.
$$
They clearly preserve the Jordan types of elements of $\g$.

The {\it Slodowy slice} is the following affine subspace of $\g$ (see~\cite[\sectsign 2.4]{Slodowy_Four_Lectures}):
$$
S_x:=x+\ker\ad\,y.
$$
As explained in~\cite[\sectsign 2.5]{Slodowy_Four_Lectures}, the automorphism $\j(t):=\sigma(t^2)\rho(t^{-1})$
stabilizes $S_x$ as well as each $S_x\cap\O_y$.
We shall use the following notation $t*z=\j(t)(z)$ for brevity. Clearly, $\lim_{t\mapsto0}t*z=x$ for any $z\in S_x$.
This property and the decomposition of the tangent space $T_x(\g)=T_x(\O_x)\oplus T_x(S_x)$ show that $S_x\cap\O_x=\{x\}$ and
$S_x\cap\O_y=\emptyset$ if $y\not\ge x$.

Let $\h$ be the Cartan subalgebra of $\g$ and $W$ the corresponding Weyl group.
In~\cite[3.4]{Slodowy_Four_Lectures}, the following simultaneous resolution was considered:
$$
\begin{CD}
\widetilde S_x@>\psi_x>> S_x\\
@V\theta_xVV @VV\chi_{\vphantom{A^A}x}V\\
\h@>\phi>>\h/W
\end{CD}
$$
Here $\widetilde S_x=\{(z,\b)\in S_x\times\BB\,\suchthat\,z\in\b\}$ and $\psi_x$ erases the second component.
We shall use the following notations for preimages: $\widetilde S_{x,h}=\theta^{-1}_x(h)$ and
$\widetilde S_{x,\bar h}=\chi^{-1}_{\vphantom{A^A}x}(\bar h)$ for $h\in\h$ and $\bar h\in\h/W$.
Thus $S_{x,0}=S_x\cap \N$, $\widetilde S_{x,0}=\pi^{-1}(S_x\cap\Nc)$.
In the special case $x=0$, we have $S_x=\g$ and $\psi_x$ restricts to the Springer resolution $\pi$.

One can easily extend the $*$-action of $\C^*$ from $S_x$ to $\widetilde S_x$ by $t*(z,\b)=(t*z,\j(t)(\b))$.
Thus the map $\psi_x:\widetilde S_x\to S_x$ becomes $*$-equivariant. In what follows, we shall not use the action
of the whole group $\C^*$ but only of its subgroup $(0,+\infty)$.

\subsection{Local systems}\label{local_systems} It is well known that $k$-local systems on a stratum $\O_x$ are given by $k\Pi_1(\O_x)$-modules.
We denote the corresponding functor by $\loc$. So we have $\L\cong\loc(\L_x)$ for any local system $\L$ on $\O_x$.

Consider the component group $A(x)=Z_G(x)/Z_G(x)^0$.
It is useful to define the functor $\Loc$ from the category of $kA(x)$-modules to the category of $k$-local systems on $\O_x$
as follows. Consider the fibration $Z_G(x)\to G\twoheadrightarrow^{\hspace{-10pt}\beta\vphantom{\beta_{\beta_\beta}}}\;\O_x$,
where $\beta(g)=g\cdot x$. Then we have the following exact sequence of groups:
$$
\Pi_1(G)\to\Pi_1(\O_x)\twoheadrightarrow^{\hspace{-10pt}\alpha\vphantom{\beta_{\beta_\beta}}}\;\Pi_0(Z_G(x))=A(x).
$$
Any $kA(x)$-module $M$ can be considered as a $k\Pi_1(\O_x)$-module $M^\alpha$ by $gm=\alpha(g)m$ for $g\in\Pi_1(\O_x)$
and $m\in M$. We set $\Loc(M):=\loc(M^\alpha)$. As $\alpha$ is epimorphic, $\Loc(M)$ is irreducible if $M$ is so.

We also describe the map $\alpha$ for further use. Let $\gamma:[0,1]\to\O_x$ be a loop based at $x$.
It can be lifted to a curve $\delta:[0,1]\to G$. We can assume that $\delta(0)=\mathbf{1}_G$ but can not in general
guarantee that $\delta$ is a loop. So the map $\alpha$ just describes this defect
taking the image of $\gamma$ in $\Pi_1(\O_x)$ to $\delta(1)Z_G(x)^0$.

\subsection{Intersection form and cohomology of difference}\label{Intersection_form_and_cohomology_of_difference} 
We shall use the following interpretation of the intersection form
for Springer fibers. Recall from~\cite[4.3]{Slodowy_Four_Lectures} that $\B_x$
is a deformation retract of $\widetilde S_{x,0}$. Therefore the map $H^{2b_x}(\widetilde S_{x,0},k)\to H^{2b_x}(\B_x,k)$
induced by the natural inclusion $\iota:\B_x\to\widetilde S_{x,0}$ is an isomorphism. Hence
we get the following sequence of $k$-linear maps:
\begin{equation}\label{eq:b}
\begin{CD}
\!H^{2b_x}(\B_x,k)^\vee\!\!@>\sim>\text{\tiny\sf induced by }\iota>\!\!H^{2b_x}(\widetilde S_{x,0},k)^\vee\!\!@>\sim>\text{\tiny\sf Poincar\'e duality}>\!\!H_c^{2b_x}(\widetilde S_{x,0},k)\!\!@>>\text{\tiny\sf induced by }\iota>\!\!H^{2b_x}(\B_x,k).
\end{CD}
\end{equation}
The composition defines a bilinear product $(\cdot,\cdot)_x:H^{2b_x}(\B_x,k)^\vee\times H^{2b_x}(\B_x,k)^\vee\to k$.
Note that although $\widetilde S_{x,0}$ depends on the chose of $y$ in the triple $(x,h,y)$,
the product $(\cdot,\cdot)_x$ does not.

As the map $\pi$ is $G$-invariant, the centralizer $Z_G(x)$ stabilizes $\B_x$ and thus acts on $H^\bullet(\B_x,k)$ by
$g\cdot h=m_{g^{-1}}^*(h)$. Moreover, its connected component $Z_G(x)^0$
acts identically on the cohomologies $H^\bullet(\B_x,k)$. 
So the component group $A(x)$ also acts on $H^\bullet(\B_x,k)$. Hence it also acts on
$H^{2b_x}(\B_x,k)^\vee$ by $g\cdot f(u)=f(g^{-1}\cdot u)$. It is easy to see that the above product $(\cdot,\cdot)_x$
is $A(x)$-invariant. 
Therefore the radical $\rad\big(H^{2b_x}(\B_x,k)^\vee\big)$ of this bilinear form is a $kA(x)$-submodule.


Now we introduce the following action of $A(x)$ on $\Hom_c^{2b_x}(\widetilde S_{x,0}\setminus\B_x,k)$.
Recall~\cite[5.1--5.4]{E} that $A(x)\cong C(x,y)/C(x,y)^0$, where $C(x,y)=Z_G(x)\cap Z_G(y)$.
As $C(x,y)$ stabilizes $\widetilde S_{x,0}\setminus\B_x$, the group $C(x,y)$ acts on
$\Hom_c^{2b_x}(\widetilde S_{x,0}\setminus\B_x,k)$ so that its subgroup $C(x,y)^0$ acts identically. 
Hence $A(x)$ also acts on $\Hom_c^{2b_x}(\widetilde S_{x,0}\setminus\B_x,k)$. A similar argument shows how to define
the action of $A(x)$ on $\Hom_c^{2b_x}(\widetilde S_{x,0},k)$.

The triviality of the odd degree cohomologies of Springer fibres gives the following exact sequence (see~\cite[III.7.6]{Iversen}):
$$
0\to H^{2b_x}_c(\widetilde S_{x,0}\setminus\B_x,k)\to  H^{2b_x}_c(\widetilde S_{x,0},k)\to  H^{2b_x}(\B_x,k)\to H^{2b_x+1}_c(\widetilde S_{x,0}\setminus\B_x,k)\to0.
$$
Clearly, all maps in this sequence are $A(x)$-invariant. Hence we get the following equivalence of $A(x)$-modules: 
$$
\rad\big(H^{2b_x}(\B_x,k)^\vee\big)\cong H^{2b_x}_c(\widetilde S_{x,0}\setminus\B_x,k)\cong H^{2b_x+1}_c(\widetilde S_{x,0}\setminus\B_x,k)^\vee\cong H^{2b_x-1}(\widetilde S_{x,0}\setminus\B_x,k).
$$

\subsection{Category of perverse sheaves} For a topological space $X$, we denote by $D^b(X,k)$ the bounded derived category
of $k$-sheaves. For an object $E$ of $D^b(X,k)$, we denote by $H^m(E)$ its $m$th cohomology sheaf.
For a stratification $\mathcal S$ of $X$ with locally closed equidimensional strata, we denote by $D^b_{\mathcal S}(X,k)$
the full subcategory of $D^b(X,k)$ whose objects $E$ are such that each cohomology sheaf $H^m(E)$
is ${\mathcal S}$-constructible (that is, the restriction $i_S^*H^m(E)$ to every stratum $S\in\mathcal S$ is locally constant).

We consider only the middle perversity $p:\mathcal S\to\Z$, which is defined by $p(S)=-\dim S$, and denote
by ${}^p\tau_{\ge 0}$ and ${}^p\tau_{\le 0}$ the truncation functors defined by the $t$-structure relative to $p$.
We denote by $\Hp H^0:D^b(X,k)\to D^b(X,k)$ the cohomological functor ${}^p\tau_{\ge 0}{}^p\tau_{\le 0}$.
In the case of trivial stratification $\mathcal S=\{X\}$, we have the following relation:
$$
\Hp H^0 E=(H^{-\dim X}E)[\dim X].
$$
Considering the shifted functor $\Hp H^a E:=\Hp H^0(E[a])$, we get in this case
\begin{equation}\label{eq:0}
\Hp H^a E=(H^{-\dim X}E[a])[\dim X]=(H^{-\dim X+a}E)[\dim X].
\end{equation}

The core of the $t$-structure for $D^b_\S(X,k)$ relative to perversity $p$ is the category of {\it perverse sheaves} $P_\S(X,k)$.
Following~\cite{BBD}, we set $\Hp\,T=\Hp H^0\circ T\circ\epsilon'$ for any exact functor between triangulated categories $T:D^b_{\S'}(X',k)\to D^b_\S(X,k)$,
where $\epsilon':P_{\S'}(X',k)\to D^b_{\S'}(X',k)$ is the natural inclusion.
We get in this way, an additive functor from $P_{\S'}(X',k)$ to $P_\S(X,k)$.

In what follows, we consider only the stratification $\mathfrak X=\{\O_x\}_{x\in\Lm}$ of $\Nc$ and
the stratifications $\mathfrak X|_Z=\{\O_x\}_{x\in\Lm'}$ of unions $Z=\bigsqcup_{x\in\Lm'}\O_x$, where $\Lambda'\subset\Lambda$.
In this case, we use the notation $P_{\mathfrak X}(Z,k):=P_{\mathfrak X|_Z}(Z,k)$ for brevity.

The simple objects of the category $P_{\mathfrak X}(Z,k)$ are the intersection cohomology complexes
$\IC_Z(\overline\O_x\cap Z,\L):=i_{!*}\L[d_x]$, where $\L$ is an irreducible local system on $\O_x\subset Z$ and
$i:\O_x\to Z$ is the natural inclusion. In Section~\ref{Composition multiplicities_and_semisimplicity}, we study the composition
multiplicities of $\IC_Z(\overline\O_x\cap Z,\L)$ in objects $E$
of $P_{\mathfrak X}(Z,k)$, which we denote by $[E:\IC_Z(\overline\O_x\cap Z,\L)]$.

In the proof of Theorem~\ref{theorem:1}, we shall also need the dualizing functor $\DD:=R\Hom(\_,\D)$,
where $\D$ is dualizing complex for the corresponding space.

\section{Neighbourhood bases}\label{Neighbourhood_bases}
\subsection{Equivariant transverse slices}
We consider here a $C$-equivariant version of Lem\-ma~3.2.20 from~\cite{CG97}, where $C$ is a reductive subgroup
of the centralizer. Remember the basic constructions of~\cite{CG97}. Let $G$ be an algebraic group over $\C$, $V$ be a
smooth algebraic $G$-variety, $X$ be a $G$-stable algebraic subvariety in $V$ and $\mathbb O$ be a $G$-orbit in $X$.
Take a submanifold $S_V$ containing a point $y\in X$ such that $T_yV=T_y\mathbb O\oplus T_yS_V$.
Suppose that $C$ is a reductive subgroup of $Z_G(y)$ such that $S_V$ is $C$-stable.

\begin{lemma}\label{lemma:0}
There exist open neighbourhoods $\Sigma_V\subset S_V$ and 
$U\subset X$ of $y$ and a homeomorphism $f:(\mathbb O\cap U)\times(X\cap \Sigma_V)\ito U$ such that
{
\renewcommand{\labelenumi}{{\rm \theenumi}}
\renewcommand{\theenumi}{{\rm(\roman{enumi})}}
\begin{enumerate} 
\item\label{part:1:lemma:0} $f$ restricts to projections
      $$
         {\rm pr}_2:\{y\}\times(X\cap \Sigma_V)\to X\cap\Sigma_V,\qquad {\rm pr}_1:(\mathbb O\cap U)\times\{y\}\to \mathbb O\cap U
      $$
\item\label{part:2:lemma:0} for any $c\in C$, there exist open neighbourhoods $Q_c\subset\mathbb O\cap U$ and
      $S_c\subset X\cap\Sigma_V$ of~$y$
      such that $c\cdot Q_c\subset\mathbb O\cap U$, $c\cdot S_c\subset X\cap\Sigma_V$ and
      $f(c\cdot q,c\cdot s)=c\cdot f(q,s)$ for any $q\in Q_c$ and $s\in S_c$.
\end{enumerate}}
\end{lemma}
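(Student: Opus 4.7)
The plan is to build $f$ via the exponential of a $C$-invariant linear complement to $\Lie Z_G(y)$ in $\g$: this complement exists precisely because $C$ is reductive, and it makes every map in the construction automatically $C$-equivariant.

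First, since $C$ is reductive and fixes $y$, the adjoint $C$-action on $\g$ leaves $\Lie Z_G(y)$ stable, so complete reducibility yields a $C$-stable complement $\m\subset\g$ with $\g=\Lie Z_G(y)\oplus\m$. The map $\phi(m):=\exp(m)\cdot y$ then satisfies $\phi(\Ad(c)m)=c\cdot\phi(m)$, because $c\exp(m)c^{-1}=\exp(\Ad(c)m)$ and $c\cdot y=y$. Its differential at $0$ is the composition $\m\hookrightarrow\g\to T_y\mathbb O$, whose kernel is $\m\cap\Lie Z_G(y)=0$, so $\phi$ restricts to a $C$-equivariant diffeomorphism from an open neighbourhood $N$ of $0$ in $\m$ onto an open neighbourhood of $y$ in $\mathbb O$.

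Next, the plan is to consider $\psi:N\times S_V\to V$, $(m,s)\mapsto\exp(m)\cdot s$. Its differential at $(0,y)$ sends $(m,v)$ to $m\cdot y+v\in T_y\mathbb O\oplus T_yS_V=T_yV$, hence is an isomorphism. Shrinking $N$ and passing to a smaller open neighbourhood $\Sigma_V\subset S_V$ of $y$, the inverse function theorem turns $\psi$ into a homeomorphism onto an open neighbourhood $U'\subset V$ of $y$. Since $T_y\mathbb O\cap T_yS_V=0$, transverse intersection lets us shrink further so that $\mathbb O\cap\Sigma_V=\{y\}$, which forces $\mathbb O\cap U'=\phi(N)$. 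Set $U:=X\cap U'$. Because $X$ is $G$-stable, for any $u\in U$ the unique $(m,s)\in N\times\Sigma_V$ with $\psi(m,s)=u$ satisfies $s=\exp(-m)\cdot u\in X$, so $\psi$ restricts to a homeomorphism $N\times(X\cap\Sigma_V)\ito U$. Composing with $\phi^{-1}\times\id$ delivers the required $f(q,s):=\exp(\phi^{-1}(q))\cdot s$, and part~(i) is immediate from $\exp(0)=1$ and $\phi\circ\phi^{-1}=\id$.

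For part~(ii), given $c\in C$, put $N_c:=N\cap\Ad(c^{-1})N$ and $\Sigma_{V,c}:=\Sigma_V\cap c^{-1}\Sigma_V$, which are open neighbourhoods of $0$ and $y$ because $\Ad(c)$ fixes $0$, $c$ fixes $y$, and $\m$, $S_V$ are $C$-stable. Take $Q_c:=\phi(N_c)$ and $S_c:=\Sigma_{V,c}\cap X$. Then $c\cdot Q_c=\phi(\Ad(c)N_c)\subset\phi(N)=\mathbb O\cap U$ and $c\cdot S_c\subset X\cap\Sigma_V$, and using $\phi^{-1}(c\cdot q)=\Ad(c)\phi^{-1}(q)$ together with $\exp(\Ad(c)\xi)=c\exp(\xi)c^{-1}$ one computes
$$
f(c\cdot q,c\cdot s)=\exp(\Ad(c)\phi^{-1}(q))\cdot(c\cdot s)=c\exp(\phi^{-1}(q))\cdot s=c\cdot f(q,s).
$$
The main obstacle is producing a $C$-equivariant local section of the orbit map $G\to\mathbb O$ that intertwines the conjugation action on $G$ with the action on $\mathbb O$; this is exactly what the $\Ad(C)$-invariant splitting of $\g$ provides, and it is the only step where reductivity of $C$ is used. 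The rest is a transparent equivariantisation of the standard argument in~\cite[Lemma~3.2.20]{CG97}.
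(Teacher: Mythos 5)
Your proof is correct and follows essentially the same route as the paper's: both hinge on choosing a $C$-stable linear complement to $\Lie Z_G(y)$ in $\g$ (your $\m$, the paper's $\p$), exponentiating it, and then defining $Q_c$, $S_c$ by intersecting the chosen neighbourhoods with their $c$-translates. The only cosmetic difference is that you phrase the shrinking in the Lie algebra ($N_c:=N\cap\Ad(c^{-1})N$) while the paper phrases it in the group ($P_c:=P\cap c^{-1}Pc$), and you spell out the details of~\cite[Lemma~3.2.20]{CG97} that the paper simply cites.
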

\begin{proof} As $C$ is reductive, $\mathfrak g$ is a rational $C$-module and we work over $\C$, the subspace
$\mathfrak p\subset\mathfrak g$ in the decomposition $\mathfrak p\oplus\Lie Z_G(y)=\mathfrak g$ can be chosen $C$-stable.
Thus $c\,\exp(\mathfrak p)\,c^{-1}=\exp(\mathfrak p)$ for any $c\in C$. Then the proof of part~\ref{part:1:lemma:0} goes
unchanged as in~\cite[Lemma~3.2.20]{CG97}. Moreover, it was noted in this proof the action map $g\mapsto g\cdot y$ takes
some open neighbourhood $P\subset\exp(\mathfrak p)$ of $\mathbf 1_G$ homeomorphically to $\mathbb O\cap U$.

Now choose an element $c\in C$. Consider the following sets
$$
P_c:=P\cap c^{-1}Pc,\quad Q_c:=P_c\cdot y,\quad \Sigma_{V,c}:=\Sigma_V\cap c^{-1}\cdot \Sigma_V,\quad S_c:=\Sigma_{V,c}\cap X.
$$
The inclusions $c\cdot Q_c\subset\mathbb O\cap U$ and $c\cdot S_c\subset X\cap \Sigma_V$ are obvious as $c\in C\subset Z_G(y)$.

Let $q\in Q_c$ and $s\in S_c$. We have $q=p\cdot y$ for some $p\in P_c$. Thus $c\cdot q=cpc^{-1}\cdot y$.
As $cpc^{-1}\in P$ and $f$ is induced by the action map (see the proof of~\cite[Lemma~3.2.20]{CG97}), we get
$$
f(c\cdot q,c\cdot s)=cpc^{-1}c\cdot s=cp\cdot s=c\cdot f(q,s).
$$
\end{proof}

Now we can generalize Corollary 3.2.21 from~\cite{CG97}.
\begin{corollary}\label{corollary:0}
Suppose that additionally to the hypothesis of Lemma~\ref{lemma:0}, we have a continuous $G$-equivariant map
$\pi:\widetilde X\to X$. Set $\widetilde U:=\pi^{-1}(U)$, $\widetilde S:=\pi^{-1}(X\cap \Sigma_V)$ and
$\widetilde S_c:=\pi^{-1}(S_c)$ for any $c\in C$.
There exists a homeomorphism $\tilde f:(\mathbb O\cap U)\times\widetilde S\ito\widetilde U$ such that
{
\renewcommand{\labelenumi}{{\rm \theenumi}}
\renewcommand{\theenumi}{{\rm(\roman{enumi})}}
\begin{enumerate}
\item\label{part:i:corollary:0} $\pi\big(\tilde f(q,\tilde s)\big)=f(q,\pi(\tilde s))$ for any $q\in\mathbb O\cap U$ and $\tilde s\in\widetilde S$.
\item\label{part:ii:corollary:0} for any $c\in C$, we get $c\cdot \widetilde S_c\subset\widetilde S$ and
$\tilde f(c\cdot q,c\cdot\tilde s)=c\cdot f(q,\tilde s)$ for any $q\in Q_c$ and $\tilde s\in\widetilde S_c$.
\end{enumerate}}
\end{corollary}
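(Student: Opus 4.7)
The plan is to reuse the construction from the proof of Corollary~3.2.21 in~\cite{CG97}, augmenting it with the $C$-equivariant choice of complement $\mathfrak p$ already made in the proof of Lemma~\ref{lemma:0}. Recall from that proof that $\mathfrak p\subset\mathfrak g$ is $C$-stable, that the exponential map sends some open neighbourhood $P\subset\exp(\mathfrak p)$ of $\mathbf 1_G$ homeomorphically onto $\mathbb O\cap U$ via $p\mapsto p\cdot y$, and that for every $c\in C$ the subset $P_c=P\cap c^{-1}Pc$ satisfies $cP_cc^{-1}\subset P$. The homeomorphism $f$ from Lemma~\ref{lemma:0} is characterized by $f(p\cdot y,s)=p\cdot s$.

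I would then define $\tilde f:(\mathbb O\cap U)\times\widetilde S\to\widetilde U$ by
$$
\tilde f(q,\tilde s):=p\cdot\tilde s,\qquad\text{where }p\in P\text{ is the unique element with }p\cdot y=q.
$$
Property~\ref{part:i:corollary:0} is then immediate from the $G$-equivariance of $\pi$:
$$
\pi\big(\tilde f(q,\tilde s)\big)=\pi(p\cdot\tilde s)=p\cdot\pi(\tilde s)=f(q,\pi(\tilde s)).
$$
To see that $\tilde f$ is a homeomorphism, I would exhibit its inverse explicitly: given $\tilde u\in\widetilde U$, write $f^{-1}(\pi(\tilde u))=(q,s)$, let $p\in P$ satisfy $p\cdot y=q$, and send $\tilde u\mapsto(q,p^{-1}\cdot\tilde u)$. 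The identity $p\cdot s=\pi(\tilde u)$ gives $\pi(p^{-1}\cdot\tilde u)=s\in X\cap\Sigma_V$, so $p^{-1}\cdot\tilde u\in\widetilde S$, and continuity follows from continuity of $p\mapsto p^{-1}$ together with continuity of the two components of $f^{-1}$.

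For part~\ref{part:ii:corollary:0}, I would first note $c\cdot\widetilde S_c\subset\widetilde S$: if $\tilde s\in\widetilde S_c$ then $\pi(\tilde s)\in S_c$, so $\pi(c\cdot\tilde s)=c\cdot\pi(\tilde s)\in c\cdot S_c\subset X\cap\Sigma_V$ by the choice of $S_c$ in Lemma~\ref{lemma:0}. Next, for $q\in Q_c$, write $q=p\cdot y$ with $p\in P_c$, so $cpc^{-1}\in P$ and $c\cdot q=(cpc^{-1})\cdot y$; then
$$
\tilde f(c\cdot q,c\cdot\tilde s)=(cpc^{-1})\cdot(c\cdot\tilde s)=cp\cdot\tilde s=c\cdot(p\cdot\tilde s)=c\cdot\tilde f(q,\tilde s),
$$
which is the claimed equivariance (reading the right-hand side of the statement as $c\cdot\tilde f(q,\tilde s)$).

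I do not anticipate any real obstacle here: the whole point of having set up Lemma~\ref{lemma:0} with a $C$-stable $\mathfrak p$ is that the classical CG97 argument transports verbatim, and all of $\tilde f$'s structure is forced by the requirement $\pi\circ\tilde f=f\circ(\id\times\pi)$ together with the action of $P$ on fibres of $\pi$. The only mild care needed is to keep track of which subset of $P$ (namely $P_c$) is required for $c$-equivariance, and this is exactly what $Q_c$ and $S_c$ in Lemma~\ref{lemma:0} were designed for.
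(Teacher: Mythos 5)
The paper omits a proof of Corollary~\ref{corollary:0}, simply appealing to the fact that it generalizes Corollary~3.2.21 of~\cite{CG97}; your reconstruction is exactly the argument that the paper tacitly has in mind. Defining $\tilde f(q,\tilde s)=p\cdot\tilde s$ with $p\in P$ the unique element such that $p\cdot y=q$, then using $G$-equivariance of $\pi$ for part~(i) and the $C$-stability of $\mathfrak p$ (hence the conjugation property $cP_cc^{-1}\subset P$) for part~(ii), is precisely how the homeomorphism~$f$ lifts, and your explicit inverse and the verification that $p^{-1}\cdot\tilde u$ lands in $\widetilde S$ close the argument cleanly. You also correctly read the displayed equivariance identity as $\tilde f(c\cdot q,c\cdot\tilde s)=c\cdot\tilde f(q,\tilde s)$ rather than the typographical $c\cdot f(q,\tilde s)$ that appears in the statement.
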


\begin{remark}\label{remark:1}\rm
Given open neighbourhoods $\Sigma'_V\subset\Sigma_V$ and $Q'\subset\mathbb O\cap U$ of $y$,
we can shrink $U$ to $U':=f(Q'\times(X\cap \Sigma'_V))$ and consider the restriction $f':=f\big|_{Q'\times(X\cap \Sigma'_V)}$.
As $f$ is a homeomorphism, $U'$ is open in $X$. Moreover, $\mathbb O\cap U'=Q'$.
\end{remark}

If $\pi$ is as in Corollary~\ref{corollary:0}, then we set $\widetilde S':=\pi^{-1}(X\cap\Sigma'_V)$,
$\widetilde U':=\tilde f(Q'\times\widetilde S')$ and $\tilde f':=\tilde f\big|_{Q'\times\widetilde S'}$.
Note that $\widetilde U':=\pi^{-1}(U')$. So we have the following commutative diagrams:
$$
\begin{tikzcd}
(\mathbb O\cap U)\times(X\cap \Sigma_V)\arrow{r}[swap]{\sim}{f}&U\\
\arrow[hook]{u}&\arrow[hook]{u}\\[-32pt]
(\mathbb O\cap U')\times(X\cap \Sigma'_V)\arrow{r}{f'}[swap]{\sim}&U'
\end{tikzcd}
\qquad\quad
\begin{tikzcd}
(\mathbb O\cap U)\times\widetilde S\arrow{r}[swap]{\sim}{\tilde f}&\widetilde U\\
\arrow[hook]{u}&\arrow[hook]{u}\\[-32pt]
(\mathbb O\cap U')\times\widetilde S'\arrow{r}{\tilde f'}[swap]{\sim}&\widetilde U'
\end{tikzcd}
$$

For any element $c\in C$, we set
$$
Q'_c:=Q'\cap c^{-1}\cdot Q'\cap Q_c,\quad S'_c:=(X\cap \Sigma'_V)\cap c^{-1}\cdot (X\cap \Sigma'_V)\cap S_c,\quad \widetilde S'_c:=\pi^{-1}(S'_c).
$$
One can easily check that these new data satisfy all conditions of Lemma~\ref{lemma:0} and Corollary~\ref{corollary:0}.

\begin{remark}\label{remark:2}\rm Applying properties~(i) of Lemma~\ref{lemma:0} and Corollary~\ref{corollary:0}, one can easily check that
$$
\tilde f\big((\mathbb O\cap U)\times\pi^{-1}(X\cap \Sigma_V\setminus\{y\})\big)=\pi^{-1}(U\setminus\mathbb O).
$$
\end{remark}

\subsection{Neighbourhoods in slices}
We describe here a special neighbourhood basis of $x$ in $S_{x,0}$.
First, recall the following formula from~\cite[\sectsign 2.5]{Slodowy_Four_Lectures}:
$$
t*\Big(x+\sum_{i=1}^{r'}c_ie_i\Big)=x+\sum_{i=1}^{r'}t^{n_i+1}c_ie_i,
$$
where $e_1,\ldots,e_{r'}$ is a basis of $\ker\ad\,y$, $n_i$ are nonnegative integers,
$c_i\in\C$ and $t\in(0,+\infty)$. It follows from this formula that the action of $C(x,y)$ on both $S_{x,0}$
and $\widetilde S_{x,0}$ is $*$-invariant, that is
\begin{equation}\label{eq:-4}
c\cdot t*z=t*c\cdot z
\end{equation}
for any $c\in C(x,y)$, $t\in(0,+\infty)$ and $z\in S_{x,0}$ or $z\in\widetilde S_{x,0}$.

Many statements below can be proved using the following observation.
Suppose that $\{t_n\}$ is a sequence of numbers in $(0,+\infty)$ and $s\in S_x$. If $\lim_{n\to+\infty}t_n=0$,
then $\lim_{n\to+\infty}t_n*s=x$, and if $\lim_{n\to+\infty}t_n=t>0$, then $\lim_{n\to+\infty}(t_n*s)=t*s$.

Consider the following open neighbourhood (``open ball'') of $x$ in $S_x$:
$$
B:=\bigg\{x+\sum_{i=1}^{r'}c_ie_i\,\Big|\,\sum_{i=1}^{r'}|c_i|^2<1\bigg\}.
$$
Clearly, $\overline{t*B}\subset B$ for $t<1$. Moreover, the closure $\overline B$ is compact and
$\{t*B\suchthat t\in(0,+\infty)\}$ is a neighbourhood basis for $x$ in $S_x$.
The intersection $A:=B\cap\Nc$ satisfies the following properties:
\begin{enumerate}
\itemsep=4pt
\item\label{A:prop:1} The closure $\overline A=\overline B\cap\Nc$ is compact.
\item\label{A:prop:2} $\{t*A\suchthat t\in(0,+\infty)\}$ is a neighbourhood basis for $x$ in $S_{x,0}$.
\item\label{A:prop:3} $\overline{t_0*A}\subset t_1*A$ for $t_0<t_1$.
\end{enumerate}

\begin{lemma}\label{lemma:1}
For $t_0<t_1$, the natural inclusion and the $*$-action by $t_1/t_0$
are homotopic maps from $\pi^{-1}(t_0*A\setminus\{x\})$ to $\pi^{-1}(t_1*A\setminus\{x\})$. Each such space
is homeomorphic to $\widetilde S_{x,0}\setminus\B_x$.
\end{lemma}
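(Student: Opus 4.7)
The statement has two parts: (1) the inclusion and the $*$-action by $t_1/t_0$ are homotopic maps $\pi^{-1}(t_0*A\setminus\{x\})\to\pi^{-1}(t_1*A\setminus\{x\})$; (2) each of these spaces is homeomorphic to $\widetilde S_{x,0}\setminus\B_x$. Both parts exploit the $(0,+\infty)$-flow $*$, which is $\pi$-equivariant and fixes $x$ (the explicit formula $t*(x+\sum c_ie_i)=x+\sum t^{n_i+1}c_ie_i$ makes $t*x=x$ obvious), together with the monotonicity $\overline{t_0*A}\subset t_1*A$ for $t_0<t_1$ from property~\ref{A:prop:3}.

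For claim (1) I would use the tautological straight-line homotopy $H(z,s):=\lambda(s)*z$ for a linear path $\lambda:[0,1]\to[1,t_1/t_0]$; the endpoints recover the inclusion and the $*$-action. The essential check is that the image stays inside $\pi^{-1}(t_1*A\setminus\{x\})$: writing $\pi(z)=t_0*a$ with $a\in A\setminus\{x\}$, $*$-equivariance of $\pi$ gives $\pi(H(z,s))=(\lambda(s)t_0)*a$ with $\lambda(s)t_0\in[t_0,t_1]$, and monotonicity of the family $\{t*A\}$ then places this point into $t_1*A$; the image avoids $x$ because $t*x=x$ and $a\neq x$.

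For claim (2), the $*$-action by $t_1/t_0$ is itself a homeomorphism between any two members of the family $\pi^{-1}(t*A\setminus\{x\})$, so it suffices to produce one homeomorphism $\pi^{-1}(A\setminus\{x\})\eqto\widetilde S_{x,0}\setminus\B_x$. My plan is to introduce a continuous ``radius'' $r:S_{x,0}\setminus\{x\}\to(0,+\infty)$ with $r(s*z)=s\,r(z)$ and $z\in t*A\Leftrightarrow r(z)<t$, by declaring $r(z)$ to be the unique positive root of $\sum r^{-2(n_i+1)}|c_i|^2=1$. The implicit function theorem gives continuity (even smoothness), since the left-hand side is strictly decreasing in $r$ with range $(0,+\infty)$. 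Then the map $\Phi(z):=(1+r(\pi(z)))^{-1}*z$ sends $\widetilde S_{x,0}\setminus\B_x$ into $\pi^{-1}(A\setminus\{x\})$ with explicit inverse $\Phi^{-1}(w):=(1-r(\pi(w)))^{-1}*w$, and $\Phi\circ\Phi^{-1}=\id=\Phi^{-1}\circ\Phi$ is a short computation using the scaling identity.

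The main obstacle I foresee is the construction of $r$. The set $A=B\cap\Nc$ has no nice smooth boundary a priori, so one cannot simply retract onto $\partial A$; instead, $r$ has to be defined directly from the open condition cutting out $t*A$, and its continuity together with the scaling identity rely essentially on the positivity of all weights $n_i+1$. Once $r$ is in hand, the two claims follow by short direct computations.
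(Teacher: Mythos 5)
Your proposal is correct and follows essentially the same route as the paper: the first claim via the one-parameter $*$-flow homotopy (your linear $\lambda$ is just a reparametrization of the paper's $F(\tilde s,t)=(t/t_0)*\tilde s$), and the second via a positively homogeneous radius $r$ on $S_{x,0}\setminus\{x\}$ followed by a radial rescaling along the $*$-flow lifted through $\pi$. The only differences are cosmetic: the paper defines $r(s):=\inf\{t:s\in t*A\}$ and derives continuity from the nesting $\overline{t_0*A}\subset t_1*A$, whereas you define $r$ as the unique positive root of $\sum r^{-2(n_i+1)}|c_i|^2=1$, which is the same number (since $z\in t*A\Leftrightarrow\sum t^{-2(n_i+1)}|c_i|^2<1$) but makes continuity immediate from the implicit function theorem. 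Your rescaling $\Phi(z)=(1+r(\pi(z)))^{-1}*z$ with inverse $(1-r(\pi(w)))^{-1}*w$ is a cleaner pair than the paper's $u(\tilde s)=\sigma(r(\pi(\tilde s)))*\tilde s$, which requires the auxiliary function $\tau(a)=(-a+\sqrt{a^2+4a})/2$ to express the inverse; both are valid homeomorphisms and the short cancellation computation you allude to does go through.
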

\begin{proof}
The required homotopy $F:\pi^{-1}(t_0*A\setminus\{x\})\times[t_0,t_1]\to\pi^{-1}(t_1*A\setminus\{x\})$ is given by
$F(\tilde s,t):=(t/t_0)*\tilde s$.

To prove the second statement, we define the {\it radius} of a point $s\in S_{x,0}$ by
$$
r(s):=\inf\big\{t\in(0,+\infty)\suchthat s\in t*A\big\}.
$$
From this definition, it is clear that $r(t*s)=tr(s)$ for any $t\in(0,+\infty)$.
Property~\ref{A:prop:3} above shows that $r$ is continuous and that $r(s)<1$ if and only if $s\in A$.
By property~\ref{A:prop:2}, we get that $r(s)=0$ if and only if $s=x$.

Now let $\sigma:(0,1)\to(0,+\infty)$ be the function defined by $\sigma(a):=a/(1-a)$.
It induces the function $\zeta:\pi^{-1}(A\setminus\{x\})\to(0,+\infty)$ by $\zeta(\tilde s):=\sigma\circ r\circ \pi(\tilde s)$.
We define the homeomorphism
$u:\pi^{-1}(A\setminus\{x\})\to\widetilde S_{x,0}\setminus\B_x$ by
$
u(\tilde s)=\zeta(\tilde s)*\tilde s
$.
To calculate its inverse, consider the function $\tau:(0,+\infty)\to(0,1)$ given by $\tau(a):=(-a+\sqrt{a^2+4a})/2$.
Then the inverse map is given by $u^{-1}(\tilde s)=\big(\sigma\circ \tau\circ r\circ\pi(\tilde s)\big)^{-1}*\tilde s$.
\end{proof}

\subsection{Equivariant transverse slices for nilpotent cones}\label{eq_nilpotent_cones} To construct a neighbourhood basis for $x$ in $\Nc$,
we apply Lemma~\ref{lemma:0} in the following situation:
$y=x$, $V=\g$, $\mathbb O=\O_x$, $X=\Nc$, $S_V=S_x$ and $C=C(x,y)=Z_G(x)\cap Z_G(y)$. Taking in to account Remark~\ref{remark:1},
we conclude that for small enough $t$, there exists an open neighbourhood $U_x(t)\subset\Nc$ of $x$ and a homeomorphism
$$
f_x(t):(\O_x\cap U_x(t))\times(t*A)\ito U_x(t)
$$
satisfying properties~\ref{part:1:lemma:0} and~\ref{part:2:lemma:0} of Lemma~\ref{lemma:0}. Applying Remark~\ref{remark:1}
one more time, we can suppose that $\O_x\cap U_x(t)$ is homeomorphic
to the open ball $\Ball(t)$ of radius $t$ centered at $0$ in a way respecting inclusion:
for $t_1<t_2$ the following diagram is commutative:
$$
\begin{tikzcd}
\O_x\cap U_x(t_2)\arrow{r}{\sim}&\Ball(t_2)\\
\arrow[hook]{u}&\arrow[hook]{u}\\[-32pt]
\O_x\cap U_x(t_1)\arrow{r}{\sim}&\Ball(t_1)
\end{tikzcd}
$$
The open sets $U_x(t)$ clearly form a neighbourhood basis of $x$ in $\Nc$.

Applying Corollary~\ref{corollary:0} to the Springer resolution $\pi:\widetilde\Nc\to\Nc$, we get the homeomorphism
$$
\tilde f_x(t):(\O_x\cap U_x(t))\times\pi^{-1}(t*A)\ito\pi^{-1}(U_x(t))
$$
satisfying properties~\ref{part:i:corollary:0} and~\ref{part:ii:corollary:0} of this corollary.
Restricting to $(\O_x\cap U_x(t))\times\pi^{-1}(t*A\setminus\{x\})$ and applying Remark~\ref{remark:2},
we obtain the homeomorphism
$$
\hat f_x(t):(\O_x\cap U_x(t))\times\pi^{-1}(t*A\setminus\{x\})\ito\pi^{-1}(U_x(t)\setminus\O_x).
$$
For $t_1<t_2$, the commutativity of the diagram
$$
\begin{tikzcd}
(\O_x\cap U_x(t_2))\times\pi^{-1}({t_2}*A\setminus\{x\})\arrow{r}[swap]{\sim}{\hat f_x(t_2)}&\pi^{-1}(U_x(t_2)\setminus\O_x)\\
\arrow[hook]{u}&\arrow[hook]{u}\\[-32pt]
(\O_x\cap U_x(t_1))\times\pi^{-1}({t_1}*A\setminus\{x\})\arrow{r}[swap]{\sim}{\hat f_x(t_1)}&\pi^{-1}(U_x(t_1)\setminus\O_x)
\end{tikzcd}
$$
together with Lemma~\ref{lemma:1} prove that the natural inclusion
$\pi^{-1}(U_x(t_1)\setminus\O_x)\hookrightarrow\pi^{-1}(U_x(t_2)\setminus\O_x)$ is homotopic to a homeomorphism for $t_0<t_1$.

\section{Composition multiplicities and semisimplicity}\label{Composition multiplicities_and_semisimplicity}

For any $kA(x)$-module $N$ and an irreducible $kA(x)$-module $M$, we denote by $[N:M]$ the composition multiplicity of $M$ in $N$.
The main result of this paper is as follows.

\begin{theorem}\label{theorem:1} For any $x\in\Lm$ and an irreducible $kA(x)$-module $M$, we have
$$
\Big[\pi_*\csh k{\NN}[d_\Nc]:\IC_\Nc\big(\overline\O_x,\Loc(M)\big)\Big]\ge \big[H^{2b_x}(\B_x,k):M\big]+\big[\rad\big(H^{2b_x}(\B_x,k)^\vee\big):M\big].
$$

Moreover, $\pi_*\csh k{\NN}[d_\Nc]$ is semisimple in $P_{\mathfrak X}(\Nc,k)$ if and only if
$\rad\big(H^{2b_x}(\B_x,k)^\vee\big)=0$ and $H^{2b_x}(\B_x,k)$ is semisimple as a $kA(x)$-module
for any $x\in\Lm$.
\end{theorem}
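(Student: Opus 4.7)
The plan is to localize the multiplicity computation to a Slodowy slice neighbourhood of $x$, reduce it to a count on the slice, and extract the inequality from the standard support-filtration of a perverse sheaf at a closed stratum. Fix $x\in\Lm$ and pass to the neighbourhood $U:=U_x(t)$ of Section~\ref{eq_nilpotent_cones}, in which $\O_x\cap U$ is a closed stratum because the Slodowy slice $S_x$ only meets orbits $\O_y$ with $y\ge x$; composition multiplicities of $\IC_\Nc(\overline{\O_x},\Loc(M))$-type factors are preserved under restriction to $U$. The $C(x,y)$-equivariant homeomorphism $\tilde f_x(t)$ yields the external tensor decomposition
$$
\pi_*\csh k{\NN}[d_\Nc]\big|_U\;\simeq\;k_{\O_x\cap U}[d_x]\boxtimes G,\qquad G:=(\pi|_{\widetilde S'})_*\csh k{\widetilde S'}[2b_x],
$$
with $\widetilde S':=\pi^{-1}(t*A)$. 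Since $\pi|_{\widetilde S'}$ is proper, $G$ is a Verdier self-dual $A(x)$-equivariant perverse sheaf on $t*A$ whose only closed stratum is $\{x\}$. Combined with the contractibility of $\O_x\cap U$ and the trivialization $\Loc(M)|_{\O_x\cap U}\cong M$ as an $A(x)$-equivariant local system, this identifies
$$
\bigl[\pi_*\csh k{\NN}[d_\Nc]:\IC_\Nc(\overline{\O_x},\Loc(M))\bigr]=[G:i_{x,*}M].
$$

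Proper base change and the deformation retraction $\B_x\hookrightarrow\widetilde S'$ of Lemma~\ref{lemma:1} give $i_x^*G\simeq H^\bullet(\B_x,k)[2b_x]$, so $\Hp H^0 i_x^*G\cong H^{2b_x}(\B_x,k)$ as $kA(x)$-modules; Verdier self-duality of $G$ yields $\Hp H^0 i_x^!G\cong H^{2b_x}(\B_x,k)^\vee$. The canonical natural transformation $i_x^!\to i_x^*$, built from the unit/counit of the $i_{x,*}$ adjunctions, induces a map $\alpha:\Hp H^0 i_x^!G\to \Hp H^0 i_x^*G$, which I identify with the intersection form $(\cdot,\cdot)_x$ by unwinding the Poincar\'e duality isomorphisms of Section~\ref{Intersection_form_and_cohomology_of_difference}. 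In particular $\ker\alpha\cong\rad(H^{2b_x}(\B_x,k)^\vee)$ as $kA(x)$-modules.

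For the inequality, I invoke the support-at-$x$ filtration: the canonical inclusion $i_{x,*}\Hp H^0 i_x^!G\hookrightarrow G$ is the maximal subobject and $G\twoheadrightarrow i_{x,*}\Hp H^0 i_x^*G$ the maximal quotient supported at $x$, with composition equal to $i_{x,*}\alpha$. Setting $K:=i_{x,*}\ker\alpha$, the subobject $K\subset G$ lies in the kernel of the quotient map, and additivity of composition multiplicities along $0\to K\to G\to G/K\to 0$ together with the surjection $G/K\twoheadrightarrow i_{x,*}\Hp H^0 i_x^*G$ yield
$$
[G:i_{x,*}M]\ge [K:i_{x,*}M]+[\Hp H^0 i_x^*G:M]=[\rad(H^{2b_x}(\B_x,k)^\vee):M]+[H^{2b_x}(\B_x,k):M],
$$
which is the stated inequality. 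For the ``moreover'' clause, semisimplicity of $\pi_*\csh k{\NN}[d_\Nc]$ forces $G$ semisimple, hence $\alpha$ an isomorphism (so $\rad=0$) and the splitting summand $i_{x,*}H^{2b_x}(\B_x,k)$ itself semisimple, whence $H^{2b_x}(\B_x,k)$ is a semisimple $kA(x)$-module. Conversely, $\rad=0$ makes $\alpha$ injective, and Verdier self-duality of $G$ equalizes source and target dimensions so $\alpha$ is an isomorphism; then the maximal subobject at $x$ splits off as a semisimple summand, and induction on the closure order on $\Lm$ completes the argument.

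The main obstacle is the identification $\alpha\leftrightarrow(\cdot,\cdot)_x$: one has to chase the canonical natural transformation $i_x^!\to i_x^*$ through proper base change, Verdier duality on $\widetilde S'$, and the Poincar\'e duality isomorphism underlying Section~\ref{Intersection_form_and_cohomology_of_difference}. A secondary but essential subtlety is preserving $A(x)$-equivariance through the product decomposition, which is precisely what the $C$-equivariant slice machinery of Section~\ref{Neighbourhood_bases} was designed to handle.
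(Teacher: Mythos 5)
Your approach is genuinely different from the paper's. The paper proceeds by induction on open subsets $\Phi\subset\Lambda$, using the BBD exact sequences to isolate $\IC_{\Nc_\Phi}\big(\O_x,\Loc(H^{2b_x}(\B_x,k))\big)$ as a quotient and $i_*\Hp H^{-1}i^*j_*E_\Psi$ as the obstruction to ${}^pj_!E_\Psi=j_{!*}E_\Psi$; the latter is then identified with $\IC_{\Nc_\Phi}\big(\O_x,\Loc(\rad)\big)$ by the monodromy calculation of Section~\ref{structureiM}. You instead localize to $U_x(t)$, split off the slice factor $G$, and compare the extremal perverse cohomologies ${}^pH^0 i_x^!G$ and ${}^pH^0 i_x^*G$ via the natural map~$\alpha$. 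If it could be made rigorous, your route is arguably more conceptual and closer in spirit to de Cataldo--Migliorini; the paper's route avoids equivariant categories entirely at the cost of a delicate explicit homotopy chase.

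There are, however, two genuine gaps. The first, and more serious, is the claim that ``composition multiplicities of $\IC_\Nc(\overline{\O_x},\Loc(M))$-type factors are preserved under restriction to $U$'' and then recovered on the slice. On $U_x(t)$ the stratum $\O_x\cap U_x(t)$ is contractible, so $\Loc(M)\big|_{\O_x\cap U_x(t)}$ is the constant sheaf of rank $\dim M$, and the restriction $\IC_\Nc(\overline{\O_x},\Loc(M))\big|_{U_x(t)}$ decomposes as $\dim M$ copies of the \emph{unique} simple $k_{\O_x\cap U_x(t)}[d_x]$. The $A(x)$-module structure --- the entire content of the $M$-dependence --- is destroyed by this restriction. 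To recover it you must pass to an equivariant (or monodromic) category on the slice and then argue that multiplicities of $A(x)$-equivariant simples $i_{x,*}M$ in $G$ equal multiplicities of $\IC_\Nc(\overline\O_x,\Loc(M))$ in the ordinary category of perverse sheaves on $\Nc$. You gesture at the $C$-equivariant machinery of Section~\ref{Neighbourhood_bases} for this, but that machinery only makes the local homeomorphisms $C$-compatible pointwise; it does not by itself furnish an equivalence of abelian categories with matching Jordan--H\"older data. This is precisely the step the paper discharges by computing the monodromy action of $\Pi_1(\O_x)$ on the stalk explicitly (equations~(\ref{eq:-3})--(\ref{eq:I})), and it is not optional. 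The second gap is the identification of $\alpha:{}^pH^0 i_x^!G\to{}^pH^0 i_x^*G$ with the intersection pairing $(\cdot,\cdot)_x$ of Section~\ref{Intersection_form_and_cohomology_of_difference}. You flag this yourself as ``the main obstacle,'' and it does need a proof: one has to chase the unit/counit composite through proper base change and the Poincar\'e duality isomorphism $H^{2b_x}(\widetilde S_{x,0},k)^\vee\cong H^{2b_x}_c(\widetilde S_{x,0},k)$, checking $A(x)$-equivariance throughout. Finally, for the ``moreover'' direction you invoke ``induction on the closure order,'' but you have not set up the inductive framework (the analogue of the paper's $E_\Phi$, $\Psi=\Phi\setminus\{x\}$ and sequence~(\ref{eq:9})) that would actually let the local splitting at one stratum propagate to global semisimplicity; the paper's open-set induction is doing real work there, not just bookkeeping.
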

\begin{proof}
We denote for brevity $E:=\pi_*\csh k{\NN}[d_\Nc]$. Take any nonempty open subset $\Phi\subset\Lambda$.  As we have the
cartesian diagram
$$
\begin{CD}
\NN_\Phi@>\tilde\imath_\Phi>>\NN\\
@V\pi_\Phi VV @V\pi VV\\
\Nc_\Phi@>i_\Phi>>\Nc
\end{CD}
$$
and the maps $\pi$ and $\pi_\Phi$ are proper, the base change gives
\begin{equation}\label{eq:1}
i_\Phi^*\,E=i_\Phi^*\,\pi_*\,\csh k{\NN}[d_\Nc]=\pi_{\Phi*}\,\tilde\imath_\Phi^{\,*}\,\csh k{\NN}[d_\Nc]=\pi_{\Phi*}\,\csh k{\NN_\Phi}[d_\Nc].
\end{equation}
We denote this complex by $E_\Phi$ and prove inductively on $\Phi$ with respect to the inclusion relation the following statement:

{\bf Inductive claim.} {\it
For any $x\in\Phi$ and an irreducible $kA(x)$-module $M$, we have
\begin{equation}\label{eq:11}
\Big[E_\Phi:\IC_{\Nc_\Phi}\big(\overline\O_x\cap\Nc_\Phi,\Loc(M)\big)\Big]\ge\big[H^{2b_x}(\B_x,k):M\big]+\big[\rad\big(H^{2b_x}(\B_x,k)^\vee\big):M\big].
\end{equation}

Moreover, $E_\Phi$ is semisimple in $P_{\mathfrak X}(\Nc_\Phi,k)$ if and only if
$\rad\big(H^{2b_x}(\B_x,k)^\vee\big)=0$ and \linebreak$H^{2b_x}(\B_x,k)$ is semisimple as a $kA(x)$-module
for any $x\in\Phi$.}

The case where $\Phi$ consists of one regular element $x_0$ is obvious, as $\pi_{\{x_0\}}$
is a homeomorphism. Hence $E_\Phi=\csh k{\O_{x_0}}[d_\Nc]=\IC_{\O_{x_0}}(\O_{x_0},k)$
is a simple module. Clearly $H^{2b_{x_0}}(\B_{x_0},k)=H^0(\{\pt\},k)=k$ and
$\rad\big(H^{2b_{x_0}}(\B_{x_0},k)^\vee\big)=H^{-1}(\widetilde S_{x_0,0}\setminus\B_{x_0},k)=0$.
It remains to notice that $k$ is the only irreducible $kA(x_0)$-module as $A(x_0)$ is the trivial group.

Now suppose that $\Phi$ contains at least two elements. Choose a minimal point $x\in\Phi$ and consider the open set
$\Psi:=\Phi\setminus\{x\}$. Let $i:\O_x\to\Nc_\Phi$ and $j:\Nc_\Psi\to\Nc_\Phi$ be the natural closed and open
embeddings, respectively.

{\it Step 1.} We have the following exact sequence (see~\cite[Lemme 1.4.19]{BBD}):
\begin{equation}\label{eq:2}
0\to{}^pi_*\,\Hp H^{-1}\,i^*\,E_\Phi\to{}^pj_!\;{}^pj^*\,E_\Phi\to E_\Phi\to{}^pi_*\;{}^pi^*\,E_\Phi\to0
\end{equation}
First, let us look at its last term.
Consider the following cartesian diagram:
$$
\begin{CD}
\NN_{\{x\}}@>\tilde\imath_{\{x\}}>>\NN\\
@V\pi_{\{x\}}VV @VV\pi V\\
\O_x@>i_{\{x\}}>>\Nc
\end{CD}
$$
Now again, the base change gives
$$
i^*\,E_\Phi=(i_\Phi i)^*\,E=i_{\{x\}}^*\,\pi_*\,\csh k{\NN}[d_\Nc]=\pi_{\{x\}*}\,\tilde\imath_{\{x\}}^{\,*}\,\csh k{\NN}[d_\Nc]=\pi_{\{x\}*}\,\csh k{\NN_{\{x\}}}[d_\Nc].
$$
This formula and~(\ref{eq:0}) imply
\begin{multline*}
{}^pi^*\,E_\Phi=\Hp H^0(\pi_{\{x\}*}\,\csh k{\NN_{\{x\}}}[d_\Nc])=\Big(H^{-d_x}(\pi_{\{x\}*}\,\csh k{\NN_{\{x\}}}[d_\Nc])\Big)[d_x]\\
=\bigg(H^{d_\Nc-d_x}\pi_{\{x\}*}\,\csh k{\NN_{\{x\}}}\bigg)[d_x]=\bigg(R^{\,2b_x}\pi_{\{x\}*}\,\csh k{\NN_{\{x\}}}\bigg)[d_x].
\end{multline*}

We denote $\L:=R^{\,2b_x}\pi_{\{x\}*}\,\csh k{\NN_{\{x\}}}$. This is a local system on $\O_x$.
As $\pi$ is proper, we get
$\L_x=H^{2b_x}(\B_x,k)$. A well-known argument shows that
\begin{equation}\label{eq:3}
\L\cong\Loc(H^{2b_x}(\B_x,k)).
\end{equation}
We are going to give a brief outline of this proof. Let $\delta:[0,1]\to G$ be a curve such that $\delta(0)=\mathbf 1_G$.
We set $c:=\delta(1)$. We define the map $\beta:G\to\O_x$ by $\beta(g)=g\cdot x$.
The composition $\gamma:=\beta\circ\delta$ is a curve in $\O_\lm$.
We get $x:=\gamma(0)$ and additionally assume that $x=\gamma(1)$.
Hence $c\in Z_G(x)$.

A routine check involving the fact that homotopic maps induce the same maps on cohomology
spaces\footnote{See Section~\ref{structureiM} for a similar argument.} 
yields the commutativity of the following diagram:
$$
\begin{tikzcd}
H^{2b_x}(\B_x,k)\arrow{rrrr}[above]{m_{c^{-1}}^*}&&&&H^{2b_x}(\B_x,k)\\[-35pt]
\arrow[swap]{d}{\wr}&&&&\arrow[swap]{d}{\wr}\\
\L_x\arrow{r}{\sim}&(\gamma^*\L)_0\arrow{r}{}&\gamma^*\L([0,1])\arrow{r}{}&(\gamma^*\L)_1\arrow{r}{\sim}&\L_x
\end{tikzcd}
$$
The bottom line corresponds to the homotopy action of the loop $\gamma$. As noted in Section~\ref{local_systems},
$\alpha$ maps the image of $\gamma$ in $\Pi_1(\O_x)$ to $cZ_G(x)^0$. Hence in the notation of~Section~\ref{local_systems},
we get $\L_x\cong H^{2b_x}(\B_x,k)^\alpha$, whence~(\ref{eq:3}) follows.

Now let us look at the first term of~(\ref{eq:2}). Similarly to the above calculations for ${}^pi^*\,E_\Phi$, we get
$$
\Hp H^{-1}\,i^*\,E_\Phi=\Hp H^{-1}\pi_{\{x\}*}\,\csh k{\NN_{\{x\}}}[d_\Nc]=\bigg(R^{\,2b_x-1}\pi_{\{x\}*}\,\csh k{\NN_{\{x\}}}\bigg)[d_x].
$$
As $\pi_{\{x\}}$ is proper, we get
\begin{equation}\label{eq:-2}
\bigg(R^{\,2b_x-1}\pi_{\{x\}*}\,\csh k{\NN_{\{x\}}}\bigg)_{\!y}=H^{\,2b_x-1}(\B_y,k)=0.
\end{equation}
for any $y\in\O_x$.
Thus we have proved that $\Hp H^{-1}\,i^*\,E_\Phi=0$.

Note here that $j^*E_\Phi={}^pj^*E_\Phi$, as $j^*$ is t-exact, and
${}^pj^*E_\Phi=(i_\Phi j)^*\,E=i_\Psi^*\,E=E_\Psi$.
So, using~(\ref{eq:3}) and~(\ref{eq:-2}), we can rewrite~(\ref{eq:2}) as the following exact sequence:
\begin{equation}\label{eq:9}
\begin{tikzcd}[column sep=4ex]
0\arrow{r}{}&{}^pj_!\,E_\Psi\arrow{r}{}&E_\Phi\arrow{r}{}&\IC_{\Nc_\Phi}\big(\O_x,\Loc(H^{2b_x}(\B_x,k))\big)\arrow{r}{}&0.
\end{tikzcd}
\end{equation}
Note that
\begin{equation}\label{eq:II}
\begin{array}{l}
\!\!\!\!\!\!\Big[\IC_{\Nc_\Phi}\big(\O_x,\Loc(H^{2b_x}(\B_x,k))\big):\IC_{\Nc_\Phi}(\O_x,\Loc(M))\Big]\\[12pt]
\hspace{15pt}=\Big[\Loc(H^{2b_x}(\B_x,k)):\Loc(M)\Big]=\big[H^{2b_x}(\B_x,k)^\alpha:M^\alpha\big]=\big[H^{2b_x}(\B_x,k):M\big].
\end{array}
\end{equation}
for any irreducible $kA(x)$-module $M$.

{\it Step 2.} We have the following exact sequences:
\begin{equation}\label{eq:4}
\begin{tikzcd}[column sep=4.5ex, row sep=small]
0\arrow{r}&i_*\Hp H^{-1}i^*j_*E_\Psi\arrow{r}&{}^pj_!E_\Psi\arrow{rd}\arrow{rr}&&{}^pj_*E_\Psi\arrow{r}&i_*{}^pi^*j_*E_\Psi\arrow{r}&0\\[-12pt]
          &                             &                           &j_{!*}E_\Psi\arrow{ru}\arrow{rd}\\[-12pt]
          &                             &0\arrow{ru}                           &                 &0
\end{tikzcd}\!\!
\end{equation}

Our next aim is to calculate $\Hp H^{-1}\,i^*j_*E_\Psi$. 
Let $\xi$ denote the composition 
$$
\begin{CD}
\NN_\Psi@>\pi_\Psi>>\Nc_\Psi@>j>>\Nc_\Phi.
\end{CD}
$$
We have $E_\Psi=\pi_{\Psi*}\,\csh k{\NN_\Psi}[d_\Nc]$ as defined by~(\ref{eq:1}), where $\Phi$ is replaced by $\Psi$.
Thus applying~(\ref{eq:0}), we get
\begin{equation}\label{eq:c}
\begin{array}{l}
\Hp H^{-1}\,i^*j_*\,E_\Psi
=\Hp H^{-1}\big(i^*(j\,\pi_\Psi)_*\,\csh k{\NN_\Psi}[d_\Nc]\big)=\bigg(H^{-1-d_x}(\,i^*\xi_*\,\csh k{\NN_\Psi}[d_\Nc])\bigg)[d_x]\\
\hspace{150pt}=\bigg(i^*H^{d_\Nc-1-d_x}\xi_*\,\csh k{\NN_\Psi}\bigg)[d_x]
=\bigg(i^*R^{2b_x-1}\xi_*\,\csh k{\NN_\Psi}\bigg)[d_x].
\end{array}
\end{equation}

By proposition~\cite[Proposition II.5.11]{Iversen}, the sheaf $\M:=R^{2b_x-1}\xi_*\,\csh k{\NN_\Psi}$
in the right-hand side is the sheaf associated to the presheaf $\M^\pre$ defined by
$$
U\mapsto H^{\,2b_x-1}(\xi^{-1}(U),k)
$$
for open $U\subset\Nc_\Phi$. By~(\ref{eq:c}) and equivalence~(\ref{eq:I}) proved (independently) in Section~\ref{structureiM} ,
we get
$
\Hp H^{-1}\,i^*j_*\,E_\Psi=\Loc\big(\rad\big(H^{2b_x}(\B_x,k)^\vee\big)\big)[d_x].
$
Finally, applying $i_*$, we obtain
\begin{equation}\label{eq:IV}
i_*\Hp H^{-1}\,i^*j_*E_\Psi=\IC_{\Nc_\Phi}\Big(\O_x,\Loc\big(\rad\big(H^{2b_x}(\B_x,k)^\vee\big)\big)\Big).
\end{equation}
Using this formula and coming back to diagram~(\ref{eq:4}), we get for any irreducible $kA(x)$-module $M$ the following inequalities:
\begin{equation}\label{eq:III}
\begin{array}{l}
\Big[{}^pj_!E_\Psi:\IC_{\Nc_\Phi}(\O_x,\Loc(M))\Big]\ge\Big[i_*\Hp H^{-1}\,i^*j_*E_\Psi:\IC_{\Nc_\Phi}(\O_x,\Loc(M))\Big]\\[12pt]
\hspace{30pt}=\Big[\Loc\big(\rad\big(H^{2b_x}(\B_x,k)^\vee\big)\big):\Loc(M)\Big]=\Big[\rad\big(H^{2b_x}(\B_x,k)^\vee\big)^\alpha:M^\alpha\Big]\\[12pt]
\hspace{270pt}=\Big[\rad\big(H^{2b_x}(\B_x,k)^\vee\big):M\Big],
\end{array}
\end{equation}
where we used the notation of Section~\ref{local_systems}.

Using sequences~(\ref{eq:9}) and~(\ref{eq:4}) and applying the functor $j_{!*}$ (which is known to preserve monomorphisms
and epimorphisms), by the inductive hypothesis, we get
\begin{multline*}
\Big[E_\Phi:\IC_{\Nc_\Phi}\big(\overline\O_y\cap\Nc_\Phi,\Loc(N)\big)\Big]
\ge\Big[{}^pj_!\,E_\Psi:\IC_{\Nc_\Phi}\big(\overline\O_y\cap\Nc_\Phi,\Loc(N)\big)\Big]\\
\ge
\Big[j_{!*}E_\Psi:\IC_{\Nc_\Phi}\big(\overline\O_y\cap\Nc_\Phi,\Loc(N)\big)\Big]
\ge\Big[E_\Psi:\IC_{\Nc_\Psi}\big(\overline\O_y\cap\Nc_\Psi,\Loc(N)\big)\Big]\\
\ge\big[H^{2b_y}(\B_y,k):N\big]+\big[\rad\big(H^{2b_y}(\B_y,k)^\vee\big):N\big]
\end{multline*}
for any $y\in\Psi$ and irreducible $kA(y)$-module $N$. Using these inequalities and plugging inequalities~(\ref{eq:II})
and~(\ref{eq:III}) to sequence~(\ref{eq:9}), we get the inequalities of the inductive claim for $\Phi$.

{\it Step 3}. Suppose that $E_\Phi$ is semisimple.
By sequence~(\ref{eq:9}), we get that ${}^pj_!\;E_\Psi$ is also semisimple.
We know that ${}^pj_!\;E_\Psi$ does not have quotient objects in $i_*P_\X(\O_x,k)$.
By semisimplicity, ${}^pj_!\;E_\Psi$ also does not have subobjects in $i_*P_\X(\O_x,k)$.
Thus ${}^pj_!\;E_\Psi=j_{!*}E_\Psi$ and by diagram~(\ref{eq:4}), we get $i_*\Hp H^{-1}i^*j_*E_\Psi=0$.
Hence $\rad\big(H^{2b_x}(\B_x,k)^\vee\big)=0$ by~(\ref{eq:IV}).

Coming back to sequence~(\ref{eq:9}),
we get that the quotient $\IC_{\Nc_\Phi}\big(\O_x,\Loc(H^{2b_x}(\B_x,k))\big)$ is also semisimple.
Thus $\Loc(H^{2b_x}(\B_x,k))$ is semisimple. Hence the $\Pi_1(\O_x)$-module $H^{2b_x}(\B_x,k)^\alpha$ is semisimple
and the $kA(x)$-module $H^{2b_x}(\B_x,k)$ is also semisimple.
As $E_\Psi$ is also semisimple, we get by induction that $\rad\big(H^{2b_y}(\B_y,k)^\vee\big)=0$ and
$H^{2b_y}(\B_x,y)$ is semisimple as a $kA(x)$-module for any $y\in\Psi$.

{\it Step 4}. Suppose that $\rad\big(H^{2b_y}(\B_y,k)^\vee\big)=0$ and $H^{2b_y}(\B_x,y)$ is semisimple as a $kA(x)$-module
for any $y\in\Phi$. By~(\ref{eq:4}) and~(\ref{eq:IV}),
we have ${}^pj_!E_\Psi=j_{!*}E_\Psi$. So sequence~(\ref{eq:9}) can be rewritten as
\begin{equation}\label{eq:13}
\begin{tikzcd}[column sep=4ex]
0\arrow{r}{}&j_{!*}E_\Psi\arrow{r}{\mu}&E_\Phi\arrow{r}{\eta}&\IC_{\Nc_\Phi}\big(\O_x,\Loc(H^{2b_x}(\B_x,k))\big)\arrow{r}{}&0.
\end{tikzcd}
\end{equation}
By induction, $E_\Psi$ is semisimple. Hence so is $j_{!*}E_\Psi$ as well as $\IC_{\Nc_\Phi}\big(\O_x,\Loc(H^{2b_x}(\B_x,k))\big)$.

It remains to show that the above sequence splits. First note that from~(\ref{eq:1}) it follows that $\DD E_\Phi=E_\Phi$.
Moreover, $H^{2b_x}(\B_x,k)^\vee\cong H^{2b_x}(\B_x,k)$, as $\rad\big(H^{2b_x}(\B_x,k)^\vee\big)=0$. Thus
\begin{multline*}
\DD\Big(\IC_{\Nc_\Phi}\big(\O_x,\Loc(H^{2b_x}(\B_x,k))\big)\Big)\cong\IC_{\Nc_\Phi}\big(\O_x,\Loc(H^{2b_x}(\B_x,k))^\vee\big)\\
\cong\IC_{\Nc_\Phi}\big(\O_x,\Loc(H^{2b_x}(\B_x,k)^\vee)\big)\cong\IC_{\Nc_\Phi}\big(\O_x,\Loc(H^{2b_x}(\B_x,k))\big).
\end{multline*}

So we have the dual morphism $\DD\eta:\IC_{\Nc_\Phi}\big(\O_x,\Loc(H^{2b_x}(\B_x,k))\big)\to E_\Phi$. As $\eta$ is an epimorphism,
$\DD\eta$ is a monomorphism. The kernel of the composition $\eta\circ\DD\eta$ is zero as otherwise $\ker\eta$
would have a subobject isomorphic to $\IC_{\Nc_\Phi}(\O_x,\mathcal K)$ for some simple local system $\mathcal K$ on $\O_x$.
This is however impossible, as $\ker\eta=\im\mu\cong j_{!*}E_\Psi$
and the latter complex can not have such subobjects by definition of $j_{!*}$.
By duality, the cokernel of $\eta\circ\DD\eta$ is also zero, whence $\eta\circ\DD\eta$ is an isomorphism.
\end{proof}

\section{Structure of $i^*\M$}\label{structureiM}
We choose some fixed Slodowy slice $S_x$ for $x$ (this means that we choose a
fixed triple $(x,h,y)$ in the Jacobson-Morozov lemma)
and a neighbourhood basis
$\{U_x(t)\}_{t\in(0,t_0]}$ for $x$, where $t_0$ is a fixed positive number as in Section~\ref{eq_nilpotent_cones}.
Reducing $t_0$, if necessary, we can assume that the closure $\overline{U_x(t_0)}$ is compact and is contained in $\Nc_\Phi$.

Remember that  $\beta:G\to\O_x$ is the map defined by $\beta(g)=g\cdot x$.
Consider a curve $\delta:[0,1]\to G$ such that $\delta(0)=\mathbf 1_G$. We set $c:=\delta(1)$ and assume
additionally that $c\in C(x,y)$. The composition $\gamma:=\beta\circ\delta$ is a curve in $\O_x$, whence also in $\N_\Phi$.
Clearly $x=\gamma(0)$. We assume additionally that $\gamma(1)=x$. So $\gamma$ is a loop in $\O_x$ based at $x$.

We can translate $S_x$ and $U_x(t)$ along $\gamma$ by
\begin{equation}\label{eq:14}
{}^aS_{\gamma(a)}:=\delta(a)\cdot S_x\quad\text{and}\quad {}^aU_{\gamma(a)}(t):=\delta(a)\cdot U_x(t)
\end{equation}
for $a\in[0,1]$. We use here the superscript ${}^a$ to avoid ambiguity: the curve $\gamma$ may (and actually do)
have self-intersections. So neither the slice nor the neighbourhood basis is determined uniquely by the point.
By definition, ${}^0S_x={}^1S_x=S_x$ and ${}^0U_x(t)=U_x(t)$, ${}^1U_x(t)=c\cdot U_x(t)$.

We can consider two pull-back functors $\gamma^*$ and $\gamma^*_\pre$ in the categories of sheaves and presheaves respectively.
Consider the following morphisms of sheafification (see~\cite{Vakil}):
$$
\sh:\M^\pre\to\M\quad\text{ и }\quad\sh':\gamma^*_\pre\M\to\gamma^*\M.
$$
Then the composition $\sh'\circ\gamma^*_\pre\sh:\gamma^*_\pre\M^\pre\to\gamma^*\M$ is also a morphism of sheafification.

Take a section $s\in\gamma^*\M([0,1])$. There exists an open covering $[0,1]=\bigcup_{i\in J}V_i$ such that
$s|_{V_i}=\sh'(V_i)(s_i)$ for some $s_i\in\gamma^*_\pre\M(V_i)$. Now recall that
$$
\gamma^*_\pre\M(V_i)=\varinjlim_{W_i\supset\gamma(V_i)}\M(W_i).
$$
So we can assume that each $s_i$ is represented by a section $r_i\in\M(W_i)$ for some open $W_i\subset\N_\Phi$
containing $\gamma(V_i)$.
Shrinking $W_i$ and refining the covering $[0,1]=\bigcup_{i\in J}V_i$, if necessary, we can assume that $r_i=\sh(W_i)(p_i)$
for some $p_i\in\M^\pre(W_i)=H^{\,2b_x}(\xi^{-1}(W_i),k)$.

As the interval $[0,1]$ is compact, we can additionally assume that $J=\{1,\ldots,m\}$ for a finite $m$, each $V_i$ is convex
and there exist points $a_0<a_1<\cdots<a_m<a_{m+1}\in[0,1]$ such that
$a_0=0\in V_1$, $a_{m+1}=1\in V_m$ и $a_i\in V_i\cap V_{i+1}$ for $i=1,\ldots,m-1$.

Consider the map $\nu_i:\overline{U_x(t_0)}\times[a_i,a_{i+1}]\to\Nc_\Phi$ defined by
$(z,a)\mapsto\delta(a)\cdot z$. It is obviously continuous.
We clearly have
$$
\bigcap_{l>1/t_0,\,l\in\Z}\overline{U_x(1/l)}\times[a_i,a_{i+1}]=\{x\}\times[a_i,a_{i+1}],
$$
whence
$$
W_i\supset\gamma([a_i,a_{i+1}])=\nu_i\Bigg(\bigcap_{l>1/t_0,\,l\in\Z}\overline{U_x(1/l)}\times[a_i,a_{i+1}]\Bigg)=\bigcap_{l>1/t_0,\,l\in\Z}\nu_i\Big(\overline{U_x(1/l)}\times[a_i,a_{i+1}]\Big).
$$
To see why the last equality holds, we use the following topological observation: if $X$ is a countably compact topological space,
$X\supset Z_1\supset Z_2\supset\cdots $ is an infinite sequence of its closed subspace and $f:X\to Y$ is a continuous map,
then $f(\bigcap_{q=1}^{+\infty}Z_q)=\bigcap_{q=1}^{+\infty}f(Z_q)$.
The above inclusion and the compactness of each $\overline{U_x(1/l)}\times[a_i,a_{i+1}]$
prove that $\nu_i(\overline{U_x(1/l)}\times[a_i,a_{i+1}])\subset W_i$ for some $l>1/t_0$.
Restricting this inclusion to $\nu_i(U_x(1/l)\times\{a\})$ and applying~(\ref{eq:14}),
we get ${}^aU_{\gamma(a)}(1/l)\subset W_i$ for any $a\in[a_i,a_{i+1}]$.
Clearly, this $l$ can be chosen the same for all $i=0,\ldots,m$.

For any $a\in[a_i,a_{i+1}]$, the composition of canonical morphisms
\begin{equation}\label{eq:-3}
\gamma^*\M([0,1])\xrightarrow{\hspace{12pt}}(\gamma^*\M)_a\eqto\M_{\gamma(a)}\eqto\M^\pre_{\gamma(a)}\eqto
H^{\,2b_x-1}\big(\pi^{-1}\big({}^aU_{\gamma(a)}(1/l)\setminus\O_x\big),k\big).
\end{equation}
maps $s$ to the image of $p_i$ under the morphism
$$
\iota_a^*:H^{\,2b_x-1}\big(\pi^{-1}(W_i\setminus\O_x),k\big)\to H^{\,2b_x-1}\big(\pi^{-1}\big({}^aU_{\gamma(a)}(1/l)\setminus\O_x\big),k\big),
$$
where $\iota_a:\pi^{-1}\big({}^aU_{\gamma(a)}(1/l)\setminus\O_x\big)\to\pi^{-1}(W_i\setminus\O_x)$
is the natural inclusion. The last isomorphism of~(\ref{eq:-3}) follows from the fact that for any $t\in(0,1/l)$
the natural inclusion $\pi^{-1}({}^aU_{\gamma(a)}(t)\setminus\O_x)\hookrightarrow\pi^{-1}({}^aU_{\gamma(a)}(t)\setminus\O_x)$
is homotopic to a homeomorphism, as noted at the end of Section~\ref{eq_nilpotent_cones}\footnote{Use the shifted
versions $\delta(a)\cdot B$ and $\delta(a)\cdot A$ of $B$ and $A$, respectively.}.

We claim that the following diagram is commutative
$$
\hspace{-9pt}\begin{tikzcd}[column sep=-25pt]
&H^{\,2b_x-1}\big(\pi^{-1}(W_i\setminus\O_x),k\big)\arrow[swap]{dl}{\iota_{a_i}^*}\arrow{dr}{\iota_{a_{i+1}}^*} & \\
H^{\,\,2b_x-1}\big(\pi^{-1}\big({}^{a_i}U_{\gamma(a_i)}(1/l)\setminus\O_x\big),k\big)& & \arrow{ll}{m_{\delta(a_{i+1})\delta(a_i)^{-1}}^*}H^{\,2b_x-1}\big(\pi^{-1}\big(^{a_{i+1}}U_{\gamma(a_{i+1})}(1/l)\setminus\O_x\big),k\big)
\end{tikzcd}\!\!\!\!\!
$$
Indeed, the maps $\iota_{a_i}$ and $\iota_{a_{i+1}}\circ m_{\delta(a_{i+1})\delta(a_i)^{-1}}$
are homotopic, the homotopy being given by $F(t):=\iota_{t}\circ m_{\delta(t)\delta(a_i)^{-1}}$ for $t\in[a_i,a_{i+1}]$.

Hence we get the commutative diagram
\begin{equation}\label{eq:-1}
\begin{tikzcd}[column sep=2.2ex]
H^{\,2b_x-1}\big(\pi^{-1}&\hspace{-17pt}(U_x(1/l)\setminus\O_x),k\big)\arrow{rr}[above]{m_{c^{-1}}^*}&&H^{\,2b_x-1}\big(\pi^{-1}(c\cdot U_x(1/l)\setminus\hspace{-18pt}&\O_x),k\big)\\[-35pt]
\arrow[swap]{d}{\wr}&&&&\arrow{d}{\wr}\\
\M_x\arrow{r}{\sim}&(\gamma^*\M)_0\arrow{r}{}&\gamma^*\M([0,1])\arrow{r}{}&(\gamma^*\M)_1\arrow{r}{\sim}&\M_x
\end{tikzcd}
\end{equation}

To understand what action of $c$ on $\M_x$ we obtained, we need first to
describe explicitly the canonical identification
$$
H^{\,2b_x-1}\big(\pi^{-1}(U_x(1/l)\setminus\O_x),k\big)\ito\M_x\ito H^{\,2b_x-1}\big(\pi^{-1}(c\cdot U_x(1/l)\setminus\O_x),k\big),
$$
which we denote by $\rho$. Choose some $t\in(0,1/l)$ such that $c\cdot U_x(t)\subset U_x(1/l)$. Then $\rho(h)$ is equal to
the preimage of $h|_{\pi^{-1}(c\cdot U_x(t)\setminus\O_x)}$ with respect to the isomorphism
$$
H^{\,2b_x-1}\big(\pi^{-1}(c\cdot U_x(1/l)\setminus\O_x),k\big)\ito H^{\,2b_x-1}\big(\pi^{-1}(c\cdot U_x(t)\setminus\O_x),k\big)
$$
induced by the natural inclusion, which is homotopic to a homeomorphism as proved in Section~\ref{eq_nilpotent_cones}\footnote{Use the shifted versions $c\cdot B$ and
$c\cdot A$ of $B$ and $A$, respectively.}. This construction clearly does not depend on the choice of $t$.

Now consider the homeomorphism $u:\pi^{-1}(A\setminus\{x\})\to\widetilde S_{x,0}\setminus\B_x$ described in Lemma~\ref{lemma:1}.
We can scale it by $u_l(z):=u(l*z)$ to we obtain a homeomorphism from $\pi^{-1}(1/l*A\setminus\{x\})$ to
$\widetilde S_{x,0}\setminus\B_x$. We shall use the explicit formula $u_l(\tilde s)=l\zeta(l*\tilde s)*\tilde s$
following from the proof of Lemma~\ref{lemma:1}.
We also denote by $v$ any homeomorphism from $\O_x\cap U_x(1/l)$ to $\C^{d_x}$.
We claim that the following diagram is commutative in the homotopy category:
\begin{equation}\label{eq:00}
{\!\!\begin{tikzcd}[column sep=29pt]
\pi^{-1}\big(U_x\big(\frac1l\big)\setminus\O_x\big)\arrow{r}{\hat f_x\(\frac 1l\)^{-1}\!\!}&\big(\O_x\cap U_x(\frac 1l)\big)\times\pi^{-1}\big(\frac 1l*A\setminus\{x\}\big)\arrow{r}{v\times u_l}&\C^{d_x}\times\widetilde S_{x,0}\setminus\B_x\\
\pi^{-1}(U_x(t)\setminus\O_x)\arrow[hook]{u}\\
\pi^{-1}(c\cdot U_x(t)\setminus\O_x)\arrow[hook]{d}\arrow{u}[swap]{m_{c^{-1}}}\\
\pi^{-1}\big(U_x\big(\frac 1l\big)\setminus\O_x\big)\arrow{r}{\hat f_x\(\frac 1l\)^{-1}\!\!}&\big(\O_x\cap U_x(\frac 1l)\big)\times\pi^{-1}\big(\frac 1l*A\setminus\{x\}\big)\arrow{r}{v\times u_l}&\C^{d_x}\times\widetilde S_{x,0}\setminus\B_x\arrow{uuu}[swap]{\id\times m_{c^{-1}}}
\end{tikzcd}
\!\!\!\!\!}
\end{equation}

To prove this commutativity, let us introduce the notation $\hat f_x(1/l)^{-1}(z)=(q(z),\tilde s(z))$.
Clearly, we can choose $t$ so small that $U_x(t)$ and $c\cdot U_x(t)$ are both subsets of $f_x(1/l)(Q_c\times S_c)$,
where $Q_c$ and $S_c$ are the subsets given by Lemma~\ref{lemma:0} in the situation of Section~\ref{eq_nilpotent_cones}.

Takin preimages with respect to $\pi$ and applying part~\ref{part:i:corollary:0} of Corollary~\ref{corollary:0},
we get that $\pi^{-1}(U_x(t))$ and $\pi^{-1}(c\cdot U_x(t))$ are both contained in $\tilde f(1/l)(Q_c\times\widetilde S_c)$
where $\widetilde S_c=\pi^{-1}(S_c)$.
Subtracting $\pi^{-1}(\O_x)$, we obtain that
$\pi^{-1}(U_x(t)\setminus\O_x)$ and $\pi^{-1}(c\cdot U_x(t)\setminus\O_x)$ are both contained in
$\hat f(1/l)(Q_c\times\pi^{-1}(S_c\setminus\{x\}))$.

Part~\ref{part:ii:corollary:0} of Corollary~\ref{corollary:0} implies now that $q(c\cdot z)=c\cdot q(z)$ and
$\tilde s(c\cdot z)=c\cdot\tilde s(z)$ for any $z\in \pi^{-1}(U_x(t)\setminus\O_x)$ or $z\in\pi^{-1}(c\cdot U_x(t)\setminus\O_x)$.
The two maps corresponding to the upper ($p=0$) and lower ($p=1$) paths from $\pi^{-1}(c\cdot U_x(t)\setminus\O_x)$ to
the lower right corner in diagram~(\ref{eq:00}) are homotopic by
$$
F(z,p)\!:=\!\!\Big(pv(q(z))+(1-p)v(c^{-1}\cdot q(z)),pl\zeta(l*\tilde s(z))*c^{-1}\cdot\tilde s(z)+(1-p)l\zeta(l*c^{-1}\cdot\tilde s(z))*c^{-1}\cdot\tilde s(z)\Big),
$$
where $p\in[0,1]$. Note that here we took into account $*$-invariance~(\ref{eq:-4}). Passing to cohomologies in
diagram~(\ref{eq:00}), we obtain the commutative diagram
$$
\begin{tikzcd}[column sep=12pt]
&H^{2b_x-1}\big(\pi^{-1}\big(U_x\big(\frac1l\big){\setminus}\O_x\big),k\big)\arrow{ldd}[swap]{m_{c^{-1}}^*}\arrow{d}{\wr}&\arrow{l}[swap]{w^*\!\!\!\!}{\sim}H^{2b_x-1}\big(\C^{d_x}{\times}\widetilde S_{x,0}{\setminus}\B_x,k\big)\arrow{ddd}{(\id\times m_{c^{-1}})^*}\\
&H^{2b_x-1}\big(\pi^{-1}(U_x(t){\setminus}\O_x),k\big)\arrow{d}{m_{c^{-1}}^*}\\
H^{2b_x-1}\big(\pi^{-1}\big(c\hspace{1pt}{\cdot}\hspace{1pt}U_x\big(\frac1l\big){\setminus}\O_x\big),k\big)\arrow{r}{\sim}&H^{2b_x-1}\big(\pi^{-1}(c\hspace{1pt}{\cdot}\hspace{1pt}U_x(t){\setminus}\O_x),k\big)\\
&H^{2b_x-1}\big(\pi^{-1}\big(U_x\big(\frac 1l\big){\setminus}\O_x\big),k\big)\arrow{u}\arrow{ul}{\rho}&\arrow{l}[swap]{w^*\!\!\!\!}{\sim}H^{2b_x-1}\big(\C^{d_x}{\times}\widetilde S_{x,0}{\setminus}\B_x,k\big)
\end{tikzcd}\!\!\!
$$
where $w:=(v\times u_l)\circ\hat f_x\big(\frac 1l\big)^{-1}$ and all the unmarked arrows stem from the natural inclusions.
Here only the commutativity of the upper left triangle needs to be explained. Indeed, it follows from the following commutative diagram:
$$
\begin{tikzcd}
\pi^{-1}\big(U_x(\frac1l)\setminus\O_x\big)&\pi^{-1}\big(c\cdot U_x(\frac1l)\setminus\O_x\big)\arrow{l}[swap]{m_{c^{-1}}}\\
\pi^{-1}(U_x(t)\setminus\O_x\big)\arrow[hook]{u}&\pi^{-1}(c\cdot U_x(t)\setminus\O_x)\arrow[hook]{u}\arrow{l}[swap]{m_{c^{-1}}}
\end{tikzcd}
$$
We can conclude now that under all the identifications that we have made, the upper arrow in diagram~(\ref{eq:-1})
is just $(\id\times m_{c^{-1}})^*$ acting on $H^{2b_x-1}\big(\C^{d_x}\times\widetilde S_{x,0}\setminus\B_x,k\big)$.

Using the Poincar\'e duality, its functoriality with respect to orientation preserving isomorphisms\footnote{This fact
is a direct consequence of~\cite[Lemma 3.3.7]{Kashiwara_Schapira}.} (top and bottom rectangles)
and the K\"unneth formula, we obtain
$$
\begin{tikzcd}[column sep=13ex]
H^{2b_x-1}\big(\C^{d_x}\times\widetilde S_{x,0}\setminus\B_x,k\big)\arrow{r}{(\id\times m_{c^{-1}})^*}\arrow{d}{\wr}&H^{2b_x-1}\big(\C^{d_x}\times\widetilde S_{x,0}\setminus\B_x,k\big)\arrow{d}{\wr}\\
H_c^{2d_x+2b_x+1}(\C^{d_x}{\times}\widetilde S_{x,0}{\setminus}\B_x,k)^\vee\arrow{d}{\wr}&H_c^{2d_x+2b_x+1}(\C^{d_x}{\times}\widetilde S_{x,0}{\setminus}\B_x,k)^\vee\arrow{l}[swap]{((\id\times m_{c^{-1}})^*)^\vee\!\!\!\!}\arrow{d}{\wr}\\
H_c^{2b_x+1}(\widetilde S_{x,0}\setminus\B_x,k)^\vee\arrow{d}{\wr}&H_c^{2b_x+1}(\widetilde S_{x,0}\setminus\B_x,k)^\vee\arrow{d}{\wr}\arrow{l}{(m_{c^{-1}}^*)^\vee}\\
H^{2b_x-1}(\widetilde S_{x,0}\setminus\B_x,k)\arrow{r}{m_{c^{-1}}^*}&H^{2b_x-1}(\widetilde S_{x,0}\setminus\B_x,k)
\end{tikzcd}\!\!
$$
Thus our calculations, diagram~(\ref{eq:-1}) and the isomorphism $A(x)\cong C(x,y)/C(x,y)^0$ prove that
$\M_x\cong H^{2b_x-1}(\widetilde S_{x,0}\setminus\B_x,k)^\alpha$ as $A(x)$-modules in the notation of
Section~\ref{local_systems}. Hence we get
\begin{equation}\label{eq:I}
i^*\M\cong\Loc\big(H^{2b_x-1}(\widetilde S_{x,0}\setminus\B_x,k)\big)\cong\Loc\big(\rad\big(H^{2b_x}(\B_x,k)^\vee\big)\big).
\end{equation}

\end{document}